\newcommand*{\mailto}[1]{\href{mailto:#1}{#1}}
\numberwithin{equation}{section}
\newtheorem{theorem}[equation]{Theorem}
\newtheorem{lemma}[equation]{Lemma}
\newtheorem{corollary}[equation]{Corollary}
\theoremstyle{definition}
\newtheorem{definition}[equation]{Definition}
\theoremstyle{remark}
\newcommand*{\N}{\mathbb{N}}
\newcommand*{\Z}{\mathbb{Z}}
\newcommand*{\R}{\mathbb{R}}
\newcommand*{\Q}{\mathbb{Q}}
\newcommand*{\dif}{\mathrm{d}}
\newcommand*{\lin}{\mathrm{lin}\,}
\newcommand*{\E}{\mathbb{E}}
\newcommand{\aveN}{\frac{1}{N}\sum_{n=1}^N}
\newcommand{\aveH}{\frac{1}{2H+1}\sum_{h=-H}^H}
\newcommand{\aveHo}{\frac{1}{H}\sum_{h=1}^H} 
\newcommand{\aveFn}{\frac{1}{\abs{\Phi_N}}\sum_{n\in \Phi_N}}
\newcommand{\ld}{\underline{d}}
\newcommand{\one}{\mathbf{1}}
\DeclarePairedDelimiter\abs{\lvert}{\rvert}
\DeclarePairedDelimiter\norm{\lVert}{\rVert}
\DeclarePairedDelimiterX\innerp[2]{\langle}{\rangle}{#1,#2}
\providecommand\given{}
\newcommand\SetSymbol[1][]{%
\nonscript\:#1\vert
\allowbreak
\nonscript\:
\mathopen{}}
\DeclarePairedDelimiterX\Set[1]\{\}{%
\renewcommand\given{\SetSymbol[\delimsize]}
#1
}
\begin{document}
\subjclass[2010]{37A30}
\title{A double return times theorem}
\author{Pavel Zorin-Kranich}
\address{Universit\"at Bonn\\
Mathematisches Institut\\
Endenicher Allee 60\\
53115 Bonn\\
Germany
}
\email{\mailto{pzorin@uni-bonn.de}}
\urladdr{\url{https://www.math.uni-bonn.de/people/pzorin/}}
\begin{abstract}
We prove that for any bounded functions $f_{1},f_{2}$ on a measure-preserving dynamical system $(X,T)$ and any distinct integers $a_{1},a_{2}$, for almost every $x$ the sequence
\[
f_{1}(T^{a_{1}n}x) f_{2}(T^{a_{2}n}x)
\]
is a good weight for the pointwise ergodic theorem.
\end{abstract}
\maketitle

\section{Introduction}\label{sec:intro}
Bourgain's bilinear pointwise ergodic theorem \cite{MR1037434} (see also \cite{MR2417419,MR3623404}) is one of the hardest known convergence results for multiple ergodic averages.
It can therefore be considered surprising that Assani, Duncan, and Moore \cite{MR3492969} have been able to extend it to a Wiener--Wintner type result by relatively simple means, using Bourgain's result as a black box.
We strengthen their result further to a double return times theorem, similarly assuming Bourgain's result as a black box.
\begin{theorem}
\label{thm:double-rtt}
Let $(X,\mu,T)$ be a (not necessarily ergodic) invertible measure-preserving dynamical system and $a_{1},a_{2}$ distinct non-zero integers.
Then for any $f_{1},f_{2}\in L^{\infty}(X)$ there exists a full measure subset $X'\subset X$ such that for every $x\in X'$ the sequence
\[
c_{n} = f_{1}(T^{a_{1}n}x) f_{2}(T^{a_{2}n}x)
\]
is a \emph{good weight for the pointwise ergodic theorem} in the sense that for every further measure-preserving dynamical system $(Y,\nu,S)$ and every $g\in L^{\infty}(Y)$ the limit
\[
\lim_{N\to\infty} \aveN c_{n} g(S^{n}y)
\]
exists for $\nu$-almost every $y\in Y$.
\end{theorem}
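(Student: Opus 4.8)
The starting point is to read the average as an orbit average of a single transformation at a non-generic point. On $X\times X\times Y$ put $R=T^{a_{1}}\times T^{a_{2}}\times S$ and let $F(x_{1},x_{2},y)=f_{1}(x_{1})f_{2}(x_{2})g(y)$; then $\aveN c_{n}g(S^{n}y)$ is precisely the Birkhoff average of $F$ along the $R$-orbit of the diagonal point $(x,x,y)$. Because $a_{1}\neq a_{2}$ this diagonal is not $R$-invariant and is null for every product measure, so the Birkhoff theorem does not apply and genuinely bilinear input is required---this is exactly where Bourgain's theorem will enter. I would first dispose of the quantifiers. To produce one full measure set $X'$ serving every $(Y,\nu,S)$ and every $g$, I would establish a maximal inequality for the operators $g\mapsto\sup_{N}\left|\aveN c_{n}g(S^{n}\cdot)\right|$, valid for a.e.\ $x$ by transference from the bilinear maximal inequality contained in Bourgain's theorem, prove convergence for $g$ in a countable dense subclass, and reduce an arbitrary system to a fixed universal one of which every system is a factor. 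It then suffices to prove convergence of $\aveN c_{n}g(S^{n}y)$ for a.e.\ $y$, with $(Y,\nu,S)$ and $g$ fixed, for a.e.\ $x$.

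To analyse the weight I would split it according to the structure theory behind Bourgain's theorem. The bilinear averages are controlled, uniformly over bounded weights of bounded complexity, by a Gowers--Host--Kra seminorm: for some finite $s$ the average is dominated by $\|f_{1}\|_{U^{s}}+\|f_{2}\|_{U^{s}}$ up to negligible terms. Fixing $\veps>0$, I would write $f_{j}=f_{j}'+f_{j}''$ with $f_{j}'$ measurable with respect to a characteristic pro-nilfactor of $(X,\mu,T)$ and $\|f_{j}''\|_{U^{s}}<\veps$. This expresses $c_{n}$ as a main term $f_{1}'(T^{a_{1}n}x)f_{2}'(T^{a_{2}n}x)$ plus error terms each carrying a factor $f_{j}''$.

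For the main term I would use that, for almost every $x$, the sequence $n\mapsto f_{1}'(T^{a_{1}n}x)f_{2}'(T^{a_{2}n}x)$ is, after approximating each $f_{j}'$ by a continuous function on a nilsystem, a nilsequence: a nilsystem orbit reparametrised by $n\mapsto a_{j}n$ is again such an orbit, a product of nilsequences is a nilsequence on the product nilmanifold, and for a.e.\ $x$ the orbit equidistributes in the relevant sub-nilmanifold. Since nilsequences are good weights for the pointwise ergodic theorem---the Wiener--Wintner theorem for nilsequences of Host--Kra and of Lesigne, which I take as a known input---the main term by itself yields a pointwise limit for a.e.\ $y$.

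The heart of the matter, and the step I expect to be hardest, is to show that the error terms contribute negligibly, uniformly in $g$; concretely, that for a.e.\ $x$,
\[
\limsup_{N\to\infty}\ \sup_{\|g\|_{\infty}\le 1}\left|\aveN f_{1}''(T^{a_{1}n}x)f_{2}(T^{a_{2}n}x)g(S^{n}y)\right|\le C\veps
\]
for a.e.\ $y$, and symmetrically. The mechanism is a van der Corput (PET) induction in $n$: each step replaces the average by an $\ell^{2}$-average over shifts $h$ of a bilinear average of the same shape, now weighted by $g(S^{n+h}y)\overline{g(S^{n}y)}$ and with $f_{1}''$ replaced by the multiplicative derivative $(f_{1}''\circ T^{a_{1}h})\overline{f_{1}''}$, whose seminorm is one degree lower. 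The intermediate bilinear averages in $x$ exist almost everywhere by Bourgain's theorem, used as a black box; the degree eventually drops to zero, at which point the expression is an ordinary ergodic average at the generic point $(x,y)$ and converges by the pointwise ergodic theorem. Unwinding the induction bounds the original $\limsup$ by a power of $\|f_{1}''\|_{U^{s}}<\veps$, and the supremum over $g$ is harmless because $g$ enters only through correlations bounded by $\|g\|_{\infty}^{2}\le 1$. Combining these steps and letting $\veps\to0$ along a sequence, intersecting the resulting countably many full measure sets, produces the set $X'$ and the existence of the limit for a.e.\ $y$.
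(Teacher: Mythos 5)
Your overall architecture (decompose via the $U^{3}$ structure theorem, treat the structured part with the nilsequence Wiener--Wintner theorem, control the $L^{1}$ error by the one-linear maximal inequality, and show the uniform part is a universally good weight for pointwise convergence to zero) matches the paper's. The gap is in the treatment of the uniform part, which is the actual content of the paper. Your van der Corput induction ``in $n$'' does not close: after one application you are left, for each fixed $h$, with $\aveN c_{n+h}\overline{c_{n}}\,g(S^{n+h}y)\overline{g(S^{n}y)}$ where $c_{n}=f_{1}''(T^{a_{1}n}x)f_{2}(T^{a_{2}n}x)$. The factor involving $g$ does not disappear at ``degree zero'', and you cannot bound the $g$-correlation by $\|g\|_{\infty}^{2}\leq 1$ and pull it out, since that discards exactly the cancellation you need (you would be left with $\aveN |c_{n+h}||c_{n}|$, which does not tend to zero). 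The correct statement hiding behind your sketch is the BFKO orthogonality criterion (Theorem~\ref{thm:bfko}): if the self-correlations $\aveN c_{n}c_{n+h}$ are small for a set of $h$ of lower density close to $1$, \emph{uniformly over ranges of $N$}, then the weight is universally good for convergence to zero. That criterion is a genuine theorem, not a one-line Cauchy--Schwarz, and it is also the device by which the paper makes $X'$ independent of $(Y,\nu,S)$ and $g$; your alternative of transferring a maximal inequality and passing to a universal system is both unnecessary (the weight is bounded, so the ordinary maximal ergodic inequality on $Y$ suffices) and incomplete as stated, since even on the universal shift the invariant measure still ranges over an uncountable family, so no countable dense subclass of targets exists.

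The deeper omission is that you never establish pointwise-in-$x$ control of those self-correlations, which is Theorem~\ref{thm:bilinear-weight-bfko} and the heart of the paper. The quantity $\aveN c_{n}c_{n+h}$ is itself a bilinear ergodic average in $x$: its a.e.\ existence already needs Bourgain's theorem, and its value at a generic $x$ is not an integral against $\mu\otimes\mu$ but against the measure $\nu_{\pi x}$ carried by the orbit closure inside the Kronecker square (Definition~\ref{def:fully-generic}). Bounding a pointwise $\limsup$ at a fixed $x$ by the global seminorm $\|f_{1}''\|_{U^{s}(X,\mu,T)}$, as your ``unwinding the induction'' step asserts, requires an intervening a.e.\ convergence statement on a suitable self-joining of $X$: this is precisely the role of the extension $\tilde X$, of Lemmas~\ref{lem:nilfactors-of-product} and~\ref{lem:lift} (which cost one degree of uniformity and are the reason $U^{3}$ rather than $U^{2}$ appears -- your unspecified exponent $s$ hides this loss), of Corollary~\ref{cor:limit-on-tilde-X-zero}, and of the fully generic points argument with the smoothed cutoffs $F_{x,L,R}$, which is what delivers the uniformity in $N$ over $[L,R]$ demanded by \eqref{eq:cond}. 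None of this is replaceable by the PET induction you sketch, so the error-term step of your proposal is not a proof.
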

This was previously known for weakly mixing systems $(X,\mu,T)$ \cite[Theorem 2]{MR1751656}.
Convergence in $L^{2}(Y)$ in Theorem~\ref{thm:double-rtt} follows from the result of Assani, Duncan, and Moore \cite{MR3492969}.

Our proof relies on the following description of a class of good weights for pointwise convergence of ergodic averages to zero that is implicit in the Bourgain--Furstenberg--Katznelson--Ornstein orthogonality criterion \cite{MR1557098}.
\begin{theorem}[{\cite[Theorem 4.1]{MR1286798}, see also \cite[Theorem 1.2]{arxiv:1301.1884}}]
\label{thm:bfko}
Let $(c_{n})_{n\in\Z}$ be a bounded sequence.
For $\delta>0$ and $0<L<R<\infty$ define
\[
S_{\delta,L,R}(c) :=
\bigcap_{N=L}^{R} S_{\delta,N}(c),
\quad
S_{\delta,N}(c) :=
\Set{h \given \abs[\big]{ \aveN c(n)c(n+h) } < \delta}.
\]
Suppose
\begin{equation}
\label{eq:cond}
\inf_{\delta>0}
\lim_{L\to\infty}
\inf_{R\geq L}
\ld(S_{\delta,L,R}(c)) = 1,
\end{equation}
where the lower density of a set is defined by $\ld(S):=\liminf_{N\to\infty} \abs{S\cap \Set{1,\dots,N}} / N$.
Then for every measure-preserving system $(Y,S)$ and every $g\in L^{\infty}(Y,S)$ we have
\[
\lim_{N\to\infty} \aveN c_{n} g(S^{n}y) = 0
\]
pointwise almost everywhere.
\end{theorem}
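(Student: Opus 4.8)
The plan is to bound the $L^2(Y)$-norm of the averages by the autocorrelations of $c$ through the van der Corput inequality, and then to promote the resulting $L^2$-decay to almost everywhere convergence by means of a maximal inequality and the Banach principle. Throughout write $A_Ng(y):=\aveN c_ng(S^ny)$, and assume for simplicity that $c$ is real-valued.

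First I would fix $g\in L^\infty(Y)$ and apply the van der Corput inequality in the Hilbert space $L^2(Y)$ to the vectors $u_n:=c_n\,g(S^n\cdot)$, obtaining for $1\le H\le N$
\[
\|A_Ng\|_{L^2(Y)}^2\le\frac{2}{H+1}\sum_{h=0}^{H}\Big|\aveN\<u_n,u_{n+h}\>\Big|+\frac{C(H+1)}{N}\|c\|_\infty^2\|g\|_\infty^2.
\]
By the $S$-invariance of $\nu$ the inner product factorises as $\<u_n,u_{n+h}\>=c_nc_{n+h}\,\gamma(h)$ with $\gamma(h):=\int_Y g\,\overline{g\circ S^{h}}\,\dif\nu$ independent of $n$, so that $\aveN\<u_n,u_{n+h}\>=\gamma(h)\cdot\aveN c_nc_{n+h}$ and $|\gamma(h)|\le\|g\|_\infty^2$. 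Thus the estimate is governed entirely by the autocorrelations $\aveN c_nc_{n+h}$ of $c$ — exactly the quantities measured by the sets $S_{\delta,N}(c)$. Given $\veps>0$ I would use the hypothesis \eqref{eq:cond} to choose $\delta>0$ and then $L$ so large that $\inf_{R\ge L}\ld(S_{\delta,L,R}(c))>1-\veps$; for $L\le N\le R$ one has $S_{\delta,L,R}(c)\subseteq S_{\delta,N}(c)$, so for $h\in S_{\delta,N}(c)$ the autocorrelation is $<\delta$ while the density hypothesis limits the number of remaining $h$ in any window, each of which contributes at most $\|c\|_\infty^2$. Summing over $0\le h\le H$ bounds the first term by $C(\delta+\veps)\|g\|_\infty^2$, whence $\limsup_N\|A_Ng\|_{L^2(Y)}^2\le C\veps\|g\|_\infty^2$; as $\veps$ is arbitrary, $A_Ng\to0$ in $L^2(Y)$.

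The main obstacle is the interplay of the three scales $H$, $N$ and $R$. The lower density $\ld(S_{\delta,L,R}(c))$ is a $\liminf$ as the counting window tends to infinity, so the scale at which a density close to $1$ is attained depends on $R$, whereas van der Corput forces $H=o(N)$. Choosing $H=H(N)\to\infty$ with $H(N)=o(N)$ and coupling $R$ to $N$ so that the density bound is simultaneously available at scale $H(N)$ is the delicate point; it is precisely for this reason that the hypothesis is phrased with the nested $\lim_L\inf_{R\ge L}$ over finite ranges $[L,R]$ rather than with the single infinite intersection $\bigcap_{N\ge L}S_{\delta,N}(c)$, whose lower density the hypothesis does not control. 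I expect this bookkeeping to carry the whole weight of the argument.

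It remains to upgrade $L^2$-convergence to pointwise convergence. The maximal inequality is immediate: since $|A_Ng|\le\|c\|_\infty\cdot\aveN|g|(S^n\cdot)$, the maximal ergodic theorem gives $\|\sup_N|A_Ng|\|_{L^2(Y)}\le C\|c\|_\infty\|g\|_{L^2(Y)}$. By the Banach principle the set of $g\in L^2(Y)$ for which $A_Ng$ converges almost everywhere is a closed subspace, so, having the $L^2$-limit $0$ in hand, it suffices to produce a dense subspace on which convergence holds pointwise. Specialising the $L^2$-statement already proved to the rotation $y\mapsto\lambda y$ on $\T$ with $g(y)=y$ yields $\aveN c_n\lambda^n\to0$ for every $\lambda\in\T$; consequently, for an eigenfunction $g$ with $g\circ S=\lambda g$ one has $A_Ng(y)=g(y)\cdot\aveN c_n\lambda^n\to0$ pointwise, which covers a dense subset of the Kronecker factor. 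On its orthogonal complement the correlations obey $\frac1H\sum_{|h|\le H}|\gamma(h)|\to0$, and reinserting this into the van der Corput bound — now estimating $|\aveN c_nc_{n+h}|\le\|c\|_\infty^2$ — together with a standard oscillation estimate summed along a lacunary sequence and controlled on the gaps by the maximal inequality, gives almost everywhere convergence to $0$ there as well. Combining the two pieces yields $A_Ng\to0$ almost everywhere for every $g\in L^\infty(Y)$, as claimed.
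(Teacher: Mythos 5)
First, a remark on the comparison you were asked for: the paper does not prove Theorem~\ref{thm:bfko} at all --- it is imported from the literature (the Bourgain--Furstenberg--Katznelson--Ornstein orthogonality criterion) with citations, so there is no internal proof to measure your argument against. On its own merits, your proposal has a genuine gap, located exactly where you say you ``expect the bookkeeping to carry the whole weight of the argument''. Your scheme is a single-scale van der Corput bound: to estimate $\|A_Ng\|_{L^2(Y)}$ (your notation) you need $|\aveN c_nc_{n+h}|<\delta$ for most $h$ in a window $[0,H]$ with $H\leq\veps N$. The hypothesis \eqref{eq:cond} gives you $\delta$ and $L$ with $\ld(S_{\delta,L,R}(c))>1-\veps$ for all $R\geq L$, but a lower density is a $\liminf$: the threshold $H_{0}(R)$ beyond which $|S_{\delta,L,R}(c)\cap[1,H]|>(1-2\veps)H$ holds is completely uncontrolled in terms of $R$, and nothing prevents $H_{0}(N)>\veps N$ for every $N$ (a set can have lower density $1$ and still be disjoint from $[1,\veps N]$). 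Letting $N\to\infty$ at fixed $H$ instead would require a lower density bound for $\bigcap_{N\geq L}S_{\delta,N}(c)$, which, as you yourself observe, the hypothesis does not provide. So the quantifiers cannot be untangled within a single-scale argument; this is not bookkeeping but the reason the known proofs run a covering/transference argument instead: fix $\delta,L$, then fix $R$ so that the set of $y$ admitting a bad scale $N\in[L,R]$ still has measure $\geq\veps$, then work on an orbit segment $g(Sy),\dots,g(S^My)$ with $M$ chosen \emph{after} $R$ and larger than $H_{0}(R)$, cover a positive fraction of $[1,M]$ by disjoint intervals $[m_j+1,m_j+N_j]$ with $N_j\in[L,R]$ on which the averages are large, and only then expand a square and invoke the correlation hypothesis. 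The finite ranges $[L,R]$ in \eqref{eq:cond} exist precisely to make this order of choices possible.

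The $L^{2}$-to-pointwise upgrade is also not in place. The maximal inequality and the Banach principle are fine, and eigenfunctions of $S$ are handled correctly (granting the $L^{2}$ statement for rotations), but for $g$ orthogonal to the Kronecker factor your sketch asks for summability of $\|A_{N_k}g\|_{L^{2}}^{2}$ along a lacunary sequence whose ratio must moreover tend to $1$ for the gap estimate via the maximal function to close; that requires a quantitative rate of decay which neither \eqref{eq:cond} nor Wiener's lemma supplies. For such $g$ the pointwise statement carries essentially the full strength of the theorem and is exactly what the covering argument above is designed to prove; it cannot be delegated to ``a standard oscillation estimate''.
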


The main technical result is that nilfactors of order $2$ are characteristic for the double return times theorem in the following sense.
\begin{theorem}
\label{thm:bilinear-weight-bfko}
Let $(X,T,\mu)$ be a (not necessarily ergodic) invertible measure-preserving dynamical system and $a_{1},a_{2}$ distinct non-zero integers.
Suppose $f_{1},f_{2}\in L^{\infty}(X)$ with $\norm{f_{i}}_{U^{3}(T)}=0$ for some $i\in\Set{1,2}$.
Then there exists a full measure set $X'\subset X$ such that for every $x\in X'$ the sequence
\[
(f_{1}(T^{a_{1}n}x) f_{2}(T^{a_{2}n}x))_{n}
\]
satisfies the orthogonality criterion \eqref{eq:cond}.
\end{theorem}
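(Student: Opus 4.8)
The plan is to verify the criterion \eqref{eq:cond} by recognizing the self-correlations of $c$ as bilinear ergodic averages of multiplicative derivatives of $f_{1},f_{2}$. Set $g_{i,h}:=f_{i}\cdot(f_{i}\circ T^{a_{i}h})$; since the powers of $T$ commute one has $c_{n}c_{n+h}=g_{1,h}(T^{a_{1}n}x)\,g_{2,h}(T^{a_{2}n}x)$, so that
\[
A_{N}(h,x):=\aveN c_{n}c_{n+h}=\aveN g_{1,h}(T^{a_{1}n}x)\,g_{2,h}(T^{a_{2}n}x)
\]
is, for each fixed $h$, exactly a Bourgain bilinear average of the uniformly bounded functions $g_{1,h},g_{2,h}$. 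Because $\max_{L\le N\le R}|A_{N}|\le\sup_{N\ge L}|A_{N}|$, Chebyshev's inequality gives $\ud(\{h:\max_{L\le N\le R}|A_{N}(h,x)|\ge\delta\})\le\delta^{-2}\limsup_{H}\aveHo\sup_{N\ge L}|A_{N}(h,x)|^{2}$, with a right-hand side independent of $R$. Hence \eqref{eq:cond} follows once we show, for almost every $x$,
\[
\lim_{L\to\infty}\limsup_{H\to\infty}\aveHo\sup_{N\ge L}|A_{N}(h,x)|^{2}=0,
\]
an assertion we abbreviate $(\star)$.

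By Bourgain's bilinear pointwise theorem, for each fixed $h$ the averages $A_{N}(h,x)$ converge almost everywhere as $N\to\infty$; intersecting the resulting full-measure sets over the countably many $h$ produces one full-measure set on which $A(h,x):=\lim_{N}A_{N}(h,x)$ exists for every $h$, and (by bounded convergence) is also the $L^{2}(X)$-limit. I would then split $\sup_{N\ge L}|A_{N}|^{2}\le 2|A|^{2}+2\sup_{N\ge L}|A_{N}-A|^{2}$ and treat the two contributions to $(\star)$ separately. For the first, I use that for distinct nonzero $a_{1},a_{2}$ the Kronecker factor is characteristic for the bilinear average, so that there is a constant $C$ with $\|A(h,\cdot)\|_{L^{2}(X)}\le C\|g_{i,h}\|_{U^{2}(T)}$ for each $i$; averaging over $h$ and invoking the inductive description of the Host--Kra seminorms (with the elementary remark that a set of $h$ of zero density meets the progression $a_{i}\N$ in zero density, so that replacing the step $h$ by $a_{i}h$ is harmless) yields $\aveHo\|A(h,\cdot)\|_{L^{2}(X)}^{2}\to0$ from the hypothesis $\|f_{i}\|_{U^{3}(T)}=0$. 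The passage from these $L^{1}(X)$ bounds to an almost-everywhere $\limsup_{H}$ statement is then a technical step, handled by the monotonicity in $L$ together with a pointwise ergodic theorem in the variable $h$.

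The remaining, and in my view decisive, difficulty is the oscillation term $\sup_{N\ge L}|A_{N}(h,x)-A(h,x)|$. Writing $\gamma_{L}(h):=\int_{X}\sup_{N\ge L}|A_{N}(h,x)-A(h,x)|^{2}\,\dif\mu$, the same reductions show that this contribution to $(\star)$ vanishes provided $\limsup_{H}\aveHo\gamma_{L}(h)\to0$ as $L\to\infty$. For each individual $h$ one has $\gamma_{L}(h)\to0$ by dominated convergence, but this is far from sufficient: a priori the rate at which $A_{N}(h,\cdot)$ settles to its limit could degrade as $h\to\infty$ along a set of positive density, in which case the $h$-average of the tail would not vanish. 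I expect this to be the main obstacle, and I do not think it can be resolved by a uniform variation or maximal inequality alone — a single late jump of each $A_{\bullet}(h,\cdot)$, with jump location tending to infinity together with $h$, is consistent with a uniform bound on the total $L^{2}(X)$-variation yet destroys the tail average.

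To overcome this I would prove a quantitative, averaged oscillation estimate for the bilinear averages in which the tail $\sup_{N\ge L}|A_{N}(h,\cdot)-A(h,\cdot)|$ is controlled in $L^{2}(X)$ by a quantity whose $h$-average is genuinely small for large $L$; concretely, one wants the bad set of $h$ for which convergence in $N$ is slow to have vanishing density. This forces one to track Bourgain's finite-scale, Fourier-analytic estimates, letting the equidistribution of the shifts $T^{a_{i}h}$ interact with the smallness measured by $\|f_{i}\|_{U^{3}(T)}$, rather than to use his convergence theorem purely as a black box. Granting such a tail estimate, the oscillation contribution to $(\star)$ vanishes; together with the treatment of the main term this establishes $(\star)$, and with it the orthogonality criterion \eqref{eq:cond} for almost every $x$, as required.
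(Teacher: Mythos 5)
Your reduction of \eqref{eq:cond} to the statement $(\star)$ via Chebyshev is fine, and you have correctly located the crux: one must interchange the average over $h$ with the limit (or tail supremum) in $N$ of the bilinear averages $A_{N}(h,x)$. But at that point the proposal stops being a proof. For the oscillation term you explicitly defer to a quantitative, $h$-averaged variational estimate for Bourgain's bilinear averages that you do not prove, and you assert that the problem cannot be handled with Bourgain's theorem as a black box; no such finite-scale estimate is supplied, and producing one is a substantially harder problem than the theorem itself. Even your ``main term'' is not closed: from $\aveHo\|A(h,\cdot)\|_{L^{2}(X)}^{2}\to 0$ Fatou only yields $\liminf_{H}\aveHo|A(h,x)|^{2}=0$ almost everywhere, not the $\limsup_{H}$ statement you need, and $h\mapsto|A(h,x)|^{2}$ is not an orbit average of a fixed function to which a pointwise ergodic theorem in $h$ directly applies. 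So both halves of your decomposition rest on unproved claims.

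The idea you are missing is that the $h$-average can be converted into a spatial integral \emph{before} any limit in $N$ is taken, which dissolves the interchange problem. Since $c_{n}c_{n+h}=F(T^{a_{1}n}x,T^{a_{2}n}x)$ evaluated at $(\xi_{1},\xi_{2})=(T^{a_{1}h}x,T^{a_{2}h}x)$, the density in \eqref{eq:cond} is (after replacing indicators by smooth cutoffs $\eta_{\delta}$ and keeping only the finitely many $N\in[L,R]$) an orbit average along $h\mapsto(T^{a_{1}h}x,T^{a_{2}h}x)$ of a continuous function $F_{x,L,R}$ on $X\times X$. For almost every $x$ (``fully generic'' points, established via the characteristic-factor property of the Kronecker factor and Bourgain's theorem) this orbit average equals $\int F_{x,L,R}\,\dif\nu_{\pi x}$, an integral over an explicit conditionally independent self-joining $\tilde X$ of $X$ over the Kronecker factor. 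On $\tilde X$ one applies Bourgain's theorem \emph{as a black box} to get almost-everywhere convergence in $N$ of $\aveN f_{1}(T^{a_{1}n}x)f_{2}(T^{a_{2}n}x)f_{1}(T^{a_{1}n}\xi_{1})f_{2}(T^{a_{2}n}\xi_{2})$, identifies the limit as $0$ by a seminorm estimate of the form $\|F_{i}\|_{U^{2}(\tilde T,c)}\lesssim\|f_{i}\|_{U^{3}(T)}^{2}$ (this is where the $U^{3}$ hypothesis enters, via the extension, not via your $g_{i,h}$ computation on $X$), and then the limits in $R$ and $L$ pass under the integral by monotone convergence. No quantitative or Fourier-analytic input beyond the qualitative convergence theorem is required; the uniformity in $h$ that blocks your argument is exactly what genericity plus integration over $\tilde X$ provides for free.
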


Given this result, Theorem~\ref{thm:double-rtt} follows from the known structural theory for $U^{3}$ seminorms.

\begin{proof}[Proof of Theorem~\ref{thm:double-rtt} assuming Theorem~\ref{thm:bilinear-weight-bfko}]
Decompose the functions $f_{1},f_{2}$ according to Theorem~\ref{thm:structure}.
The uniform parts contribute universally good weights for pointwise convergence to zero by Theorems~\ref{thm:bilinear-weight-bfko} and~\ref{thm:bfko}.
The error terms can be controlled by the usual $1$-linear maximal inequality.
It remains to handle the structured parts, and here convergence follows from the nilsequence Wiener--Wintner theorem \cite{MR2544760}.
\end{proof}

The basic idea for verification of the orthogonality criterion is to use Bourgain's bilinear ergodic theorem to convert the lower density in \eqref{eq:cond} to an integral, see Section~\ref{sec:fully-generic}.
The integral is then evaluated using Bourgain's theorem on a certain extension of $X$ that is constructed in Section~\ref{sec:extension}.
A necessary uniformity seminorm estimate is proved in Section~\ref{sec:seminorms}.

\section{Uniformity seminorms}
\label{sec:seminorms}
\begin{definition}
Let $(X,\mu,T)$ be a (not necessarily ergodic) invertible measure-preserving dynamical system and $c$ a non-zero integer.
We define uniformity seminorms by
\[
\norm{ f }_{U^{1}(X,\mu,T,c)} := \norm{ \E(f | I_{T^{c}}) }_{L^{2}},
\]
where $I_{T^{c}}$ is the invariant factor of $T^{c}$, and
\[
\norm{ f }_{U^{l+1}(X,\mu,T,c)}^{2^{l+1}} := \limsup_{H\to\infty} \aveH \norm{ f T^{h} f }_{U^{l}(X,\mu,T,c)}^{2^{l}},
\quad
l\geq 1.
\]
\end{definition}
We omit the parameter $c$ if $c=1$:
\[
\norm{ f }_{U^{l}(X,\mu,T)} := \norm{ f }_{U^{l}(X,\mu,T,1)}.
\]
In this case our definition specializes to the standard (non-ergodic) definition of uniformity seminorms in \cite{MR2795725}.

Using the well-known fact that the limit in the definition of uniformity seminorms exists (even in the uniform Ces\`aro sense) and induction on $l$ one can show that
\begin{equation}
\label{eq:seminorm-ergodic-disintegration}
\norm{ f }_{U^{l}(X,\mu,T,c)}^{2^{l}}
=
\int_{x} \norm{ f }_{U^{l}(X,\mu_{x},T,c)}^{2^{l}}
\end{equation}
for any disintegration $\mu=\int_{x}\mu_{x}$ over a factor contained in the invariant factor $I_{T}$.

We will omit some or all of the subscripts $X,\mu,T$ from the uniformity seminorms when there is no potential for confusion.
We will not verify subadditivity of the functionals $U^{l}(T,c)$ for $c\neq 1$ because it will not be used.

The main structural result about uniformity seminorms in the non-ergodic case is the following.
\begin{theorem}[{\cite[Proposition 3.1]{MR2795725}}]
\label{thm:structure}
Let $(X,\mu,T)$ be a (not necessarily ergodic) measure-preserving dynamical system.
For every $l\geq 0$, every function $f\in L^{\infty}(X,\mu)$ bounded by $1$, and every $\epsilon>0$ there exists a decomposition
\[
f = f_{s} + f_{e} + f_{u}
\]
with $\norm{f_{s}}_{\infty},\norm{f_{e}}_{\infty},\norm{f_{u}}_{\infty}<2$ such that
\begin{enumerate}
\item for almost every $x\in X$ the sequence $(f_{s}(T^{n}x))$ is an $l$-step nilsequence,
\item $\norm{f_{e}}_{L^{1}} < \epsilon$, and
\item $\norm{f_{u}}_{U^{l+1}(X,\mu,T)} = 0$.
\end{enumerate}
\end{theorem}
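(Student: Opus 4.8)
The plan is to reduce to the ergodic case via the ergodic decomposition and then feed in the Host--Kra theory of characteristic factors. First I would disintegrate $\mu = \int_X \mu_x \dif\mu(x)$ over the invariant factor $I_T$, so that $(X,\mu_x,T)$ is ergodic for $\mu$-almost every $x$. For an ergodic system the decisive input is that the $l$-step nilfactor $Z_l$ is characteristic for $U^{l+1}$, meaning $\|g\|_{U^{l+1}}=0$ precisely when $\E(g\mid Z_l)=0$, and that $Z_l$ is realized as an inverse limit of $l$-step nilsystems. Assembling the fiberwise nilfactors $Z_l(\mu_x)$ into a single $\sigma$-algebra $\mathcal{Z}_l$, I would set $f_u := f - \E(f\mid\mathcal{Z}_l)$. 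The seminorm identity \eqref{eq:seminorm-ergodic-disintegration} then reduces the global estimate to the fiberwise one: $\|f_u\|_{U^{l+1}(X,\mu,T)}^{2^{l+1}} = \int_X \|f_u\|_{U^{l+1}(X,\mu_x,T)}^{2^{l+1}}\dif\mu(x)=0$, which produces the uniform part (3).

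Next I would split the structured remainder $\E(f\mid\mathcal{Z}_l)$. Fiberwise, $Z_l(\mu_x)$ is the increasing inverse limit of $l$-step nilsystems $Z_l^{(k)}(\mu_x)$, so by martingale convergence $\E(f\mid Z_l^{(k)})\to\E(f\mid Z_l)$ in $L^2(\mu_x)$. Choosing a level $k=k(x)$ large enough and approximating the conditional expectation by a continuous function $F$ on the corresponding nilmanifold $G_x/\Gamma_x$ (with $T$ acting as translation by some $g_x$), I obtain a function $f_s$ with $f_s(T^nx)=F(g_x^n\cdot\Gamma_x)$, an $l$-step nilsequence for almost every $x$, giving (1). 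Setting $f_e := \E(f\mid\mathcal{Z}_l)-f_s$ and arranging both approximations so that $\int_X\|f_e\|_{L^1(\mu_x)}\dif\mu(x)<\epsilon$ yields (2). The sup-norm bounds are routine: conditional expectation does not increase $\|\cdot\|_\infty$, so $\|f_u\|_\infty\le 2$, while $F$ may be truncated to keep $\|f_s\|_\infty$, and hence $\|f_e\|_\infty$, below $2$.

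The main obstacle is the non-ergodic gluing hidden in the second step. The approximation level $k(x)$ and the nilmanifold data $(G_x,\Gamma_x,g_x)$ genuinely depend on the ergodic component, so one must organize them into a measurable field of nilsystems and check two things: that $\mathcal{Z}_l$ is a bona fide factor of $(X,\mu,T)$ whose measurable functions yield nilsequences along almost every orbit, and that the $L^1$ error remains controlled by a single integral bound despite this $x$-dependence. Making the Host--Kra structure theory uniform across the ergodic decomposition in this way --- for which \eqref{eq:seminorm-ergodic-disintegration} is precisely the tool that transfers the fiberwise conclusions back to $(X,\mu,T)$ --- is the technical heart of the statement, while the ergodic-case structure theorem enters only as a black box on each fiber.
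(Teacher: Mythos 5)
First, a point of reference: the paper does not prove this statement at all --- it is imported verbatim from Chu--Frantzikinakis--Host \cite[Proposition 3.1]{MR2795725} and used as a black box, so there is no in-paper proof to compare against. Measured against the actual proof in the cited source, your strategy is the right one and essentially the same: ergodic decomposition, the Host--Kra structure theorem on each fiber (the $l$-step nilfactor $Z_{l}(\mu_{x})$ is characteristic for $U^{l+1}(\mu_{x})$ and is an inverse limit of $l$-step nilsystems), the identity \eqref{eq:seminorm-ergodic-disintegration} to globalize $\|f_{u}\|_{U^{l+1}}=0$, and an $L^{1}$-approximation of the structured part by functions whose orbit sequences are nilsequences.

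The problem is that your write-up defers rather than executes the step that carries the content of the proposition beyond the ergodic Host--Kra theorem. You name it yourself: assembling the fiberwise factors $Z_{l}(\mu_{x})$ into a genuine factor $\mathcal{Z}_{l}$ of $(X,\mu,T)$, and choosing the level $k(x)$, the nilmanifold data $(G_{x},\Gamma_{x},g_{x})$, and the continuous approximant $F$ \emph{measurably} in $x$ so that $f_{s}$ is a single measurable function on $X$ with $\int_{X}\|f_{e}\|_{L^{1}(\mu_{x})}\dif\mu(x)<\epsilon$. Asserting that this "must be organized into a measurable field of nilsystems" is a statement of the difficulty, not a proof; a complete argument needs either a measurable-selection argument over countable families of rational parameters, or (cleaner for item (3)) the observation that $Z_{l}$ can be defined directly for non-ergodic systems via the Host--Kra cube construction, with only the nilsequence approximation (1) requiring fiberwise selection. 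Two smaller fixable points: the orbit sequence should read $f_{s}(T^{n}x)=F(g_{x}^{n}\cdot\pi_{x}(x))$ for the factor map $\pi_{x}$ onto $G_{x}/\Gamma_{x}$, not $F(g_{x}^{n}\cdot\Gamma_{x})$ (wrong base point); and since $Z_{l}(\mu_{x})$ is only an inverse limit of nilsystems, you should take "nilsequence" in the sense that includes uniform limits of basic nilsequences, as the cited source does, for the martingale/truncation approximations to stay within the class. With those repairs your outline matches the known proof; as it stands it is a correct plan with the technical heart left open.
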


Note that
\[
c' \mid c
\implies
\norm{ f }_{U^{l}(T,c')} \leq \norm{ f }_{U^{l}(T,c)}
\quad\text{for all } f\in L^{\infty}(X).
\]
The main reason to use the $U^{l}(T,c)$ seminorms instead of the (smaller) $U^{l}(T)$ seminorms is the following estimate, originally proved in \cite[Theorem 12.1]{MR2150389} in the case $a_{i}=i$.
\begin{lemma}
\label{lem:uniformity-seminorm-estimate-for-multilinear-averages}
Let $(X,T)$ be a measure-preserving system, $f_{1},\dots,f_{k}\in L^{\infty}(X)$ functions bounded by $1$, and $a_{1},\dots,a_{k}$ distinct integers.
Then for every F\o{}lner sequence $(\Phi_{N})$ in $\Z$ and every $i$ we have
\[
\limsup_{N\to\infty}
\norm[\big]{ \aveFn T^{a_{1}n}f_{1} \cdots T^{a_{k}n}f_{k} }_{2}
\lesssim_{a_{1},\dots,a_{k},i,c}
\norm{ f_{i} }_{U^{k}(T,c)},
\]
where we can choose $c=a_{1}$ if $k=1$ and $c=a_{i}-a_{i'}$ for any $i'\neq i$ if $k>1$.
\end{lemma}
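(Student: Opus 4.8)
The plan is to induct on $k$. For the base case $k=1$ the average $\aveFn T^{a_{1}n}f_{1}$ converges in $L^{2}$ to $\E(f_{1}\mid I_{T^{a_{1}}})$ by the mean ergodic theorem along the F\o{}lner sequence $(\Phi_{N})$, so its limiting norm is exactly $\|f_{1}\|_{U^{1}(T,a_{1})}$; this gives the claim with $c=a_{1}$ and implied constant $1$. The conceptual point already visible here is that the relevant invariant factor is that of a \emph{power} $T^{a_{1}}$, which is precisely why the seminorms $U^{\bullet}(T,c)$ with $c\neq 1$ are unavoidable.

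For the inductive step I would run a single van der Corput step in the variable $n$. Writing $u_{n}=T^{a_{1}n}f_{1}\cdots T^{a_{k}n}f_{k}$, the van der Corput inequality for F\o{}lner averages gives
\[
\limsup_{N}\big\|\aveFn u_{n}\big\|_{2}^{2}\lesssim\limsup_{H}\aveHo\limsup_{N}\big|\aveFn\langle u_{n+h},u_{n}\rangle\big|.
\]
Expanding and using measure preservation to shift by $T^{-a_{p}n}$ for a pivot index $p$ to be chosen, one computes $\langle u_{n+h},u_{n}\rangle=\int g_{p}^{(h)}\prod_{j\neq p}T^{(a_{j}-a_{p})n}g_{j}^{(h)}$, where $g_{j}^{(h)}:=T^{a_{j}h}f_{j}\cdot\overline{f_{j}}$. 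Since $\|g_{p}^{(h)}\|_{\infty}\leq 1$, Cauchy--Schwarz removes the stationary factor and leaves a $(k-1)$-linear average of the functions $g_{j}^{(h)}$ ($j\neq p$) attached to the distinct nonzero exponents $a_{j}-a_{p}$, to which the inductive hypothesis applies uniformly in $h$.

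The choice of pivot is the bookkeeping trick that produces the \emph{exact} parameter $c=a_{i}-a_{i'}$. For $k=2$ I take $p=i'$, so that the base case applied to the single remaining function $g_{i}^{(h)}$ (exponent $a_{i}-a_{i'}$) grounds out in $I_{T^{a_{i}-a_{i'}}}$ and yields $\|g_{i}^{(h)}\|_{U^{1}(T,a_{i}-a_{i'})}$. For $k\geq 3$ I take any $p\notin\{i,i'\}$, which exists, and apply the inductive hypothesis to the $(k-1)$-linear average while extracting the index $i$ and using $i'$ as the auxiliary index; this gives $c'=(a_{i}-a_{p})-(a_{i'}-a_{p})=a_{i}-a_{i'}=c$. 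Either way I arrive at $\limsup_{N}\|\aveFn u_{n}\|_{2}^{2}\lesssim\limsup_{H}\aveHo\|g_{i}^{(h)}\|_{U^{k-1}(T,c)}$, with implied constant depending only on the $a_{j}$.

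It remains to convert the one-sided average over $h$ of $\|g_{i}^{(h)}\|_{U^{k-1}(T,c)}=\|T^{a_{i}h}f_{i}\cdot\overline{f_{i}}\|_{U^{k-1}(T,c)}$ into $\|f_{i}\|_{U^{k}(T,c)}$, and this is the step I expect to be the main obstacle, because the differencing shift appears with the multiplier $a_{i}$ rather than $1$. Raising to the power $2^{k-1}$ by Jensen, I would observe that $\gamma(s):=\|T^{s}f_{i}\cdot\overline{f_{i}}\|_{U^{k-1}(T,c)}^{2^{k-1}}$ is even in $s$ by the $T$-invariance and conjugation symmetry of the seminorm, so that $\limsup_{H}\aveH\gamma(h)=\|f_{i}\|_{U^{k}(T,c)}^{2^{k}}$. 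Since $\{a_{i}h:1\leq h\leq H\}\subset\{1,\dots,|a_{i}|H\}$, a direct counting argument yields $\limsup_{H}\aveHo\gamma(a_{i}h)\leq|a_{i}|\cdot\limsup_{H}\aveH\gamma(h)$, so the shift multiplier costs only the admissible constant $|a_{i}|$ and, crucially, does not alter $c$. Tracking the exponents back through the Jensen step returns $\limsup_{N}\|\aveFn u_{n}\|_{2}\lesssim_{a_{1},\dots,a_{k},i,c}\|f_{i}\|_{U^{k}(T,c)}$ to the first power, closing the induction; the various conjugation placements generated by van der Corput are immaterial by the conjugation symmetry of the seminorms.
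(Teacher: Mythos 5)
Your proposal is correct and follows essentially the same route as the paper: induction on $k$ via a single van der Corput step, the same choice of pivot index (the unique other index when there are two functions, a third index otherwise) so that the parameter grounds out at $c=a_{i}-a_{i'}$, and the same rescaling $h\mapsto a_{i}h$ of the differencing parameter combined with a power-mean inequality to recover $\|f_{i}\|_{U^{k}(T,c)}^{2}$. The only differences are cosmetic: you apply Jensen before rescaling the shift rather than after, and you spell out the counting/evenness argument for the rescaling step, which the paper leaves implicit in a ``$\lesssim_{a_{i}}$''.
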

If one insists on an estimate with $c=1$, then $U^{k+1}(T,1)$ norms have to be used for general $a_{i}$'s, see \cite[Proposition 2]{MR2191208}.
\begin{proof}
By induction on $k$.
If $k=1$, then $c=a_{1}$, and by the mean ergodic theorem and the definition of uniformity seminorms the limit equals
\[
\norm{\E(f_{1} | I_{T^{a_{1}}})}_{L^{2}} = \norm{f_{1}}_{U^{1}(T,c)}.
\]
Suppose that the result is known for some $k$ and consider the case of $k+1$ functions.
Let
\[
u_{n} := T^{a_{1}n}f_{1} \cdots T^{a_{k+1}n}f_{k+1}.
\]
By the van der Corput lemma we have
\begin{equation}
\label{eq:unif-est-vdC}
\limsup_{N\to\infty} \norm{ \aveFn u_{n} }_{L^{2}}^{2}
\lesssim
\liminf_{H\to\infty} \aveH \limsup_{N\to\infty} \abs[\big]{ \aveFn \innerp{u_{n}}{u_{n+h}} }
\end{equation}
Fix distinct indices $i,j$ if $k=1$ or $i,i',j$ if $k>1$.
We have
\begin{align*}
\abs[\big]{ \aveFn \innerp{u_{n}}{u_{n+h}} }
&=
\abs[\big]{ \aveFn \int T^{a_{1}n}(f_{1}T^{a_{1}h}f_{1}) \cdots T^{a_{k+1}n}(f_{k+1}T^{a_{k+1}h}f_{k+1}) }\\
&=
\abs[\big]{ \aveFn \int f_{j}T^{a_{j}h}f_{j} \prod_{i\neq j} T^{(a_{i}-a_{j})n}(f_{i}T^{a_{i}h}f_{i}) }\\
&\leq
\norm[\big]{ \aveFn \prod_{i\neq j} T^{(a_{i}-a_{j})n}(f_{i}T^{a_{i}h}f_{i}) }_{L^{2}}.
\end{align*}
By the inductive hypothesis the $\limsup_{N\to\infty}$ of this is bounded by
\[
\norm{ f_{i}T^{a_{i}h}f_{i} }_{U^{k}(T,c)},
\]
where we can choose $c=a_{i}-a_{j}$ if $k=1$ and $c=(a_{i}-a_{j})-(a_{i'}-a_{j})$ if $k>1$, so that in both cases $c=a_{i}-a_{i'}$ with $i'\neq i$.
It follows that the right-hand side of \eqref{eq:unif-est-vdC} is bounded by
\begin{align*}
\liminf_{H\to\infty} \aveH \norm{ f_{i}T^{a_{i}h}f_{i} }_{U^{k}(T,c)}
&\lesssim_{a_{i}}
\liminf_{H\to\infty} \aveH \norm{ f_{i}T^{h}f_{i} }_{U^{k}(T,c)}\\
\text{by Cauchy--Schwarz}
&\leq
\big( \liminf_{H\to\infty} \aveH \norm{ f_{i}T^{h}f_{i} }_{U^{k}(T,c)}^{2^{k}} \big)^{2^{-k}}\\
&\leq
\norm{ f_{i} }_{U^{k+1}(T,c)}^{2}
\end{align*}
as required.
\end{proof}

The next result is a hybrid between two well-known facts.
Firstly, the nilfactor of step $k$ of a product of two systems is contained in the product of their nilfactors of step $k+1$.
Secondly, the nilfactor of order $k$ of $T^{c}$ is contained in the nilfactor of order $k+1$ of $T$, see \cite[Proposition 2]{MR2191208} and \cite[\textsection 2.2]{MR2795725}.
By proving these two facts simultaneously we lose only one nilpotency step instead of two.

\begin{lemma}
\label{lem:nilfactors-of-product}
Let $(X,T)$ and $(S,Y)$ be measure-preserving systems and $f\in L^{\infty}(X)$, $g\in L^{\infty}(Y)$ be measurable functions.
Then for any non-zero integers $a,b,c$ and $l\geq 1$ we have
\[
\norm{ f \otimes g }_{U^{l}(T^{a}\times S^{b},c)} \leq \abs{ab}^{1/4} \abs{c}^{1/2^{l}} \norm{ f }_{U^{l+1}(T)} \norm{ g }_{U^{l+1}(S)}.
\]
\end{lemma}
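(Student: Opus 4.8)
The plan is to argue by induction on $l$, letting the recursive definition of the seminorms drive the inductive step and reserving the spectral theorem for the base case $l=1$, which is where the real content sits.

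For the inductive step, I would assume the estimate at level $l$ for all pairs of functions and exploit the factorization
\[
(f\otimes g)\cdot(T^a\times S^b)^h(f\otimes g) = (f\,T^{ah}f)\otimes(g\,S^{bh}g),
\]
which lets me feed the functions $F_h := f\,T^{ah}f$ and $G_h := g\,S^{bh}g$ into the inductive hypothesis inside the defining average
\[
\|f\otimes g\|_{U^{l+1}(T^a\times S^b,c)}^{2^{l+1}} = \limsup_{H\to\infty}\aveH\|F_h\otimes G_h\|_{U^l(T^a\times S^b,c)}^{2^l}.
\]
After raising the inductive bound to the power $2^l$ and applying Cauchy--Schwarz to separate the two factors of the $h$-average, I am left with quantities of the form $\limsup_H\aveH\|f\,T^{ah}f\|_{U^{l+1}(T)}^{2^{l+1}}$. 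Since the summands are non-negative and the progression $a\Z$ sits inside $\Z$ with density $1/|a|$, this average over $a\Z$ is dominated by $|a|$ times the full average, hence by $|a|\,\|f\|_{U^{l+2}(T)}^{2^{l+2}}$ by the very definition of the higher seminorm. Collecting constants, the exponent of $|ab|$ that appears is $1/8+2^{-(l+2)}$, which is at most $1/4$ for every $l\ge 1$; since $|ab|\ge 1$ this only improves the bound, while the exponent of $|c|$ comes out as exactly $2^{-(l+1)}$, matching the claim at level $l+1$.

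The crux is the base case $l=1$. Using that $U^1(\,\cdot\,,c)$ is the $L^2$-norm of the conditional expectation onto the $(T^a\times S^b)^c$-invariant functions together with the mean ergodic theorem for the unitary $T^{ac}\times S^{bc}$, I would first write
\[
\|f\otimes g\|_{U^1(T^a\times S^b,c)}^2 = \lim_{N\to\infty}\aveN \<T^{acn}f,f\>\,\<S^{bcn}g,g\>.
\]
Introducing the spectral measures $\sigma_f,\sigma_g$ on $\T$ via $\<T^m f,f\> = \int_{\T}e^{2\pi\mathrm i m\theta}\dif\sigma_f(\theta)$ and evaluating the resulting Ces\`aro limit of exponential sums, the right-hand side becomes $(\sigma_f\times\sigma_g)(E)$ with $E = \{(\theta,\eta): a\theta+b\eta\in\tfrac1c\Z\}$, a union of $|c|$ parallel cosets of a closed subgroup of $\T^2$. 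A Fubini computation shows that the continuous parts of $\sigma_f$ and $\sigma_g$ contribute nothing to $E$ (the fibers are finite), so only atoms survive and
\[
(\sigma_f\times\sigma_g)(E) = \sum_{\theta,\eta\,:\,a\theta+b\eta\in\frac1c\Z}\sigma_f(\{\theta\})\,\sigma_g(\{\eta\}),
\]
the sum running over atoms of the two measures.

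It remains to bound this bilinear form and to connect it with the $U^2$ seminorms. A two-sided incidence count shows that for a fixed atom $\theta$ the relation $a\theta+b\eta\in\frac1c\Z$ has at most $|bc|$ solutions $\eta\in\T$, and symmetrically each $\eta$ admits at most $|ac|$ solutions $\theta$; Schur's test then bounds the form by $|c|\,|ab|^{1/2}\bigl(\sum_\theta\sigma_f(\{\theta\})^2\bigr)^{1/2}\bigl(\sum_\eta\sigma_g(\{\eta\})^2\bigr)^{1/2}$. Finally I would relate the atom sums to the seminorms: passing to the ergodic decomposition of $(X,\mu,T)$, Wiener's lemma identifies $\sum_\theta\sigma_f(\{\theta\})^2$ with $\|f\|_{U^2}^4$ on each ergodic component, and Jensen's inequality applied to the decomposition yields $\sum_\theta\sigma_f(\{\theta\})^2\le\|f\|_{U^2(T)}^4$ globally. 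Combining these and taking a square root gives the base case. I expect this last step to be the main obstacle: in the non-ergodic setting $\sum_\theta\sigma_f(\{\theta\})^2$ is in general strictly smaller than $\|f\|_{U^2(T)}^4$, so the inequality must be oriented correctly, and the convexity supplied by the ergodic decomposition is exactly what makes it run in the right direction.
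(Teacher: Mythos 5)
Your inductive step is essentially the paper's: the same factorization $(f\otimes g)\cdot(T^{a}\times S^{b})^{h}(f\otimes g)=(fT^{ah}f)\otimes(gS^{bh}g)$, the inductive hypothesis followed by Cauchy--Schwarz in $h$, and the density-$1/|a|$ comparison of the average over $a\Z$ with the full average; your exponent bookkeeping ($1/8+2^{-(l+2)}\leq 1/4$ for $|ab|$, exactly $2^{-(l+1)}$ for $|c|$) agrees with the paper's. Where you genuinely diverge is the base case $l=1$. The paper stays soft: after the mean ergodic theorem and Cauchy--Schwarz in $h$ it dominates $|\<T^{cah}f,f\>|$ by $\|fT^{cah}f\|_{U^{1}(T)}$, compares the average over $ca\Z$ with the full average at the cost of a factor $|ac|$, and reads off $\|f\|_{U^{2}(T)}^{2}$ from the definition --- the same three moves as the inductive step, with no spectral theory. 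You instead identify the limit exactly as $(\sigma_{f}\times\sigma_{g})(E)$ with $E=\{(\theta,\eta):a\theta+b\eta\in\tfrac{1}{c}\Z\}$, kill the continuous parts by Fubini on the finite fibers, and bound the surviving bilinear form over atoms by Schur's test with row and column counts $|bc|$ and $|ac|$; the link back to $U^{2}$ goes through Wiener's lemma on ergodic components and Jensen across the ergodic decomposition. This is correct, including the orientation of the Jensen step, which you rightly flag as the delicate point: one needs $\sum_{\theta}\sigma_{f}(\{\theta\})^{2}\leq\|f\|_{U^{2}(T)}^{4}$ in the non-ergodic case, and convexity gives exactly that. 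What your route buys is an exact formula for the $U^{1}(T^{a}\times S^{b},c)$ seminorm of a tensor product as the mass of a product spectral measure on $|c|$ cosets of a one-dimensional subgroup of $\T^{2}$, which explains where the constant $|ab|^{1/4}|c|^{1/2}$ comes from and is in principle sharper; what it costs is extra machinery (spectral measures, Wiener's lemma, measurable disintegration of spectral measures over the ergodic decomposition) that the paper's argument avoids entirely. One minor caveat: your spectral computation implicitly takes $f,g$ real-valued (otherwise conjugates must be inserted consistently), but the paper makes the same tacit assumption.
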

\begin{proof}
Consider first the case $l=1$.
By the mean ergodic theorem we have
\begin{align*}
\norm{ f \otimes g }_{U^{1}(T^{a}\times S^{b},c)}^{2}
&=
\lim_{H\to\infty} \aveH \innerp{T^{cah}f \otimes S^{cbh}g}{f\otimes g}\\
&=
\lim_{H\to\infty} \aveH \innerp{T^{cah}f}{f} \innerp{S^{cbh}g}{g}\\
\text{by Cauchy--Schwarz}
&\leq
\limsup_{H\to\infty}
\big( \aveH \abs{\innerp{T^{cah}f}{f}}^{2} \big)^{1/2}\\
&\qquad\cdot
\big( \aveH \abs{\innerp{S^{cbh}g}{g}}^{2} \big)^{1/2}\\
&\leq
\big( \limsup_{H\to\infty} \aveH \norm{ f T^{cah}f }_{U^{1}(T)}^{2} \big)^{1/2}\\
&\qquad\cdot
\big( \limsup_{H\to\infty} \aveH \norm{ g S^{cbh}g }_{U^{1}(S)}^{2} \big)^{1/2}\\
&\leq
\abs{ac}^{1/2} \big( \limsup_{H\to\infty} \aveH \norm{ f T^{h}f }_{U^{1}(T)}^{2} \big)^{1/2}\\
&\qquad\cdot
\abs{bc}^{1/2} \big( \limsup_{H\to\infty} \aveH \norm{ g S^{h}g }_{U^{1}(S)}^{2} \big)^{1/2}\\
&=
\abs{ab}^{1/2}\abs{c} \norm{ f }_{U^{2}(T)}^{2} \norm{ g }_{U^{2}(S)}^{2}
\end{align*}
as required.
Suppose now that the claim is known for some $l\geq 1$, we will show that it holds for $l+1$.
We have
\begin{align*}
\norm{ f \otimes g }_{U^{l+1}(T^{a}\times S^{b})}^{2^{l+1}}
&=
\limsup_{H\to\infty} \aveH \norm{ (f\otimes g) (T^{ah}f \otimes S^{bh}g) }_{U^{l}(T^{a}\times S^{b})}^{2^{l}}\\
\text{by inductive hypothesis}
&\leq
\abs{ab}^{2^{l}/4} \abs{c}
\limsup_{H\to\infty} \aveH \norm{ fT^{ah}f}_{U^{l+1}(T)}^{2^{l}} \norm{ gS^{bh}g }_{U^{l+1}(S)}^{2^{l}}\\
\text{by Cauchy--Schwarz}
&\leq
\abs{ab}^{2^{l-2}} \abs{c}
\big( \limsup_{H\to\infty} \aveH \norm{ fT^{ah}f}_{U^{l+1}(T)}^{2^{l+1}} \big)^{1/2}\\
&\qquad\cdot
\big( \limsup_{H\to\infty} \aveH \norm{ gS^{bh}g}_{U^{l+1}(S)}^{2^{l+1}} \big)^{1/2}\\
&\leq
\abs{ab}^{2^{l-2}+1/2} \abs{c}
\big( \limsup_{H\to\infty} \aveH \norm{ fT^{h}f}_{U^{l+1}(T)}^{2^{l+1}} \big)^{1/2}\\
&\qquad\cdot
\big( \limsup_{H\to\infty} \aveH \norm{ gS^{h}g}_{U^{l+1}(S)}^{2^{l+1}} \big)^{1/2}\\
&=
\abs{ab}^{2^{l-2}+1/2} \abs{c}
\norm{ f }_{U^{l+2}(T)}^{2^{l+2}/2} \norm{ g }_{U^{l+2}(S)}^{2^{l+2}/2}\\
&\leq
\abs{ab}^{2^{l-1}} \abs{c}
\norm{ f }_{U^{l+2}(T)}^{2^{l+1}} \norm{ g }_{U^{l+2}(S)}^{2^{l+1}}
\end{align*}
as required.
\end{proof}

\section{An extension}
\label{sec:extension}
Let $(X,\mu,T)$ be an ergodic measure-preserving dynamical system and let $\pi : (X,T)\to (Z,\alpha)$ be the projection onto the Kronecker factor, where $(Z,\alpha)$ is a compact monothetic group.
Let
\[
\mu = \int_{Z} \mu_{z} \dif z
\]
be the corresponding disintegration, where the integral over $Z$ is taken with respect to the Haar measure.
Fix distinct non-zero integers $a_{1},a_{2}$ and let
\[
\tilde Z := \Set{ (z,z_{1},z_{2}) \in Z^{3} \given (z_{1}-z,z_{2}-z) \in Z'},
\quad
Z' := \overline{\Set{(a_{1}n\alpha,a_{2}n\alpha),n\in\Z}}.
\]
Then $\tilde Z$ is a closed $(\alpha,\alpha,\alpha)$-invariant subgroup of the compact commutative group $Z^{3}$.
Consider the space
\[
\tilde X := \Set{ (x,\xi_{1},\xi_{2}) \in X^{3} \given (\pi x,\pi\xi_{1},\pi\xi_{2}) \in \tilde Z}
\]
with the measure
\[
\tilde\mu := \int_{\tilde Z} \mu_{z_{0}} \otimes \mu_{z_{1}} \otimes \mu_{z_{2}} \dif (z_{0},z_{1},z_{2}),
\]
where the integral is taken with respect to the Haar measure on $\tilde Z$.
With the coordinate projections $\pi_{0},\pi_{1},\pi_{2}$ the space $\tilde X$ becomes a $3$-fold self-joining of $X$ conditionally independent over $\tilde Z$ and invariant under $\tilde T = (T,T,T)$.

\begin{lemma}
\label{lem:lift}
Let $i\in\Set{1,2}$ and $f_{i}\in L^{\infty}(X)$.
Define a function on $\tilde X$ by
\[
F_{i}(x,\xi_{1},\xi_{2}) := f_{i}(x) f_{i}(\xi_{i}).
\]
Then for every $l\geq 1$ and $c\in\N_{>0}$ we have
\[
\norm{F_{i}}_{U^{l}(\tilde X,\tilde\mu,\tilde T,c)} \lesssim_{a_{i},l,c} \norm{f_{i}}_{U^{l+1}(X,\mu,T)}^{2}.
\]
\end{lemma}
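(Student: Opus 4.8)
The plan is to prove, by induction on $l$, a single uniform statement valid for all functions at once. For $g\in L^\infty(X)$ set $G_g:=(g\circ\pi_0)\cdot(g\circ\pi_i)$, so that $F_i=G_{f_i}$, and I will produce a constant $C=C(a_i,c)$, \emph{independent of $l$ and of $g$}, with
\[
\|G_g\|_{U^l(\tilde X,\tilde\mu,\tilde T,c)}^{2^l}\le C\,\|g\|_{U^{l+1}(X,\mu,T)}^{2^{l+1}}\qquad(l\ge 1).
\]
The key point is that this form of function is reproduced by the recursion defining the seminorms: since $\tilde T=(T,T,T)$ acts diagonally, $G_g\cdot\tilde T^{h}G_g=G_{g\,T^{h}g}$. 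Hence, assuming the estimate at level $l$ and applying it to $g':=g\,T^{h}g$, one gets $\|G_{g'}\|_{U^l(\tilde T,c)}^{2^l}\le C\,\|g\,T^{h}g\|_{U^{l+1}(T)}^{2^{l+1}}$; averaging over $h$ and taking $\limsup_{H\to\infty}$ turns the left-hand side into $\|G_g\|_{U^{l+1}(\tilde T,c)}^{2^{l+1}}$ and the right-hand side into $C\,\|g\|_{U^{l+2}(T)}^{2^{l+2}}$, by the very definitions of the two seminorms. Thus the inductive step is automatic (and loses nothing in the constant), so the whole lemma reduces to the base case $l=1$.

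For $l=1$ I would use the mean ergodic theorem for $\tilde T^{c}$ to write $\|G_g\|_{U^1(\tilde T,c)}^2=\lim_{H\to\infty}\aveH\<\tilde T^{ch}G_g,G_g\>_{\tilde\mu}$ and insert the disintegration $\tilde\mu=\int_{\tilde Z}\mu_{z_0}\otimes\mu_{z_1}\otimes\mu_{z_2}$. Since $G_g$ depends only on the coordinates $0$ and $i$, the remaining coordinate integrates out and only the marginal of $\tilde\mu$ on $(z_0,z_i)$ survives, namely the Haar measure on $\tilde Z_i:=\{(z,z')\in Z^2:z'-z\in Z_i'\}$, where $Z_i':=\overline{\{a_in\alpha:n\in\Z\}}$ is the image of $Z'$ under the $i$-th coordinate projection. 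Writing $\phi_m(z):=\int_X(T^mg)\bar g\,\dif\mu_z=\E\big((T^mg)\bar g\mid Z\big)(z)$ and parametrising $\tilde Z_i$ by $(z,z+w)$ with $w\in Z_i'$, this becomes
\[
\|G_g\|_{U^1(\tilde T,c)}^2=\lim_{H\to\infty}\aveH\int_Z\phi_{ch}(z)\,(P\phi_{ch})(z)\,\dif z,\qquad(P\psi)(z):=\int_{Z_i'}\psi(z+w)\,\dif w,
\]
where $P$ is the orthogonal projection of $L^2(Z)$ onto the $Z_i'$-translation-invariant functions. Since $\int_Z\phi\,P\phi=\int_Z(P\phi)^2$, expanding $P$ in characters gives $\|G_g\|_{U^1(\tilde T,c)}^2\le\sum_{\chi\in\widehat{Z/Z_i'}}\limsup_{H\to\infty}\aveH|\hat\phi_{ch}(\chi)|^2$.

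Two facts then finish the base case. First, the quotient $Z/Z_i'$ is finite of order at most $|a_i|$: its dual consists of the characters $\chi$ of $Z$ with $\chi(\alpha)^{a_i}=1$, and since $\chi\mapsto\chi(\alpha)$ embeds $\widehat Z$ into the circle there are at most $|a_i|$ of these. This is exactly where the dependence on $a_i$ enters. Second, for each such $\chi$ I claim $\limsup_{H\to\infty}\aveH|\hat\phi_h(\chi)|^2\le\|g\|_{U^2(T)}^4$: indeed $\hat\phi_h(\chi)=\<T^hg,\,g\,e_\chi\>$ with $e_\chi:=\chi\circ\pi$ an eigenfunction of $T$, so by the spectral theorem the $\limsup$ of these Cesàro averages equals $\sum_\lambda a_\lambda a_{\lambda\chi(\alpha)^{-1}}$, where $a_\lambda:=|\<g,e_\lambda\>|^2$ and $\lambda$ ranges over the eigenvalues of $T$ (simple, by ergodicity of $X$); by Cauchy--Schwarz and the translation-invariance of the sum this is at most $\sum_\lambda a_\lambda^2=\|g\|_{U^2(T)}^4$. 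Replacing the average over $ch$ by the average over $h$ costs a factor $|c|$ through the standard comparison of Cesàro averages along an arithmetic progression, which is where the dependence on $c$ appears. Combining, $\|G_g\|_{U^1(\tilde T,c)}^2\lesssim_{a_i,c}\|g\|_{U^2(T)}^4$, the base case with $C=C(a_i,c)$.

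The main obstacle is precisely this base case, and within it the estimate for a single twisted correlation: one must recognise the $\limsup$ of $\aveH|\<T^hg,g\,e_\chi\>|^2$ as the shifted $\ell^2$-pairing $\sum_\lambda a_\lambda a_{\lambda\chi(\alpha)^{-1}}$ of the eigenvalue coefficients of $g$ and see that the twist by $e_\chi$ cannot increase it beyond $\|g\|_{U^2(T)}^4$. The reduction to the Kronecker factor and the appearance of the finite group $Z/Z_i'$ both rely on the ergodicity of $X$ assumed in this section; the passage from $ch$ to $h$ together with the counting of the at most $|a_i|$ relevant characters are what force the final constant to depend on $a_i$ and $c$, exactly as in the statement. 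Note that the product structure of the joining is absorbed entirely by the self-reproducing induction above, so no separate appeal to Lemma~\ref{lem:nilfactors-of-product} is needed.
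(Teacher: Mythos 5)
Your proof is correct, but it takes a genuinely different route from the paper. The paper's proof is soft: it restricts to the factor $\pi_{0}\vee\pi_{i}$ of $\tilde X$, identifies it (using ergodicity, so that $I_{T^{a_{i}}}$ is finite) with a positive-measure invariant subset of the product system $X\times X$, compares seminorms via \eqref{eq:seminorm-ergodic-disintegration}, and then invokes Lemma~\ref{lem:nilfactors-of-product}; the induction on $l$ is hidden inside that lemma, whose inductive step decouples the two tensor factors by Cauchy--Schwarz at every level. You instead exploit the self-reproducing identity $G_{g}\cdot\tilde T^{h}G_{g}=G_{g\,T^{h}g}$, which is available precisely because both tensor factors carry the \emph{same} function under the \emph{same} diagonal action; this makes the inductive step lossless and automatic, and pushes all the work into the base case $l=1$, which you then compute explicitly on the Kronecker factor via the marginal of $\tilde\mu$ on coordinates $(0,i)$, the finiteness of $Z/Z_{i}'$ (order at most $|a_{i}|$), and Wiener's lemma for the twisted correlations $\<T^{h}g,ge_{\chi}\>$. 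Both arguments are valid; what the paper's route buys is reusability (Lemma~\ref{lem:nilfactors-of-product} is a general statement about $T^{a}\times S^{b}$ with two unrelated functions) and the avoidance of any spectral computation, while your route buys self-containedness (no appeal to Lemma~\ref{lem:nilfactors-of-product} at all) and an $l$-independent constant. Two small points to keep in mind: your base case uses ergodicity in several places (simplicity of eigenvalues, $\sum_{\lambda}a_{\lambda}^{2}=\|g\|_{U^{2}(T)}^{4}$, finiteness of $Z/Z_{i}'$), which is legitimate here since Section~\ref{sec:extension} assumes $X$ ergodic; and the identity $\int_{Z}\phi\,P\phi=\|P\phi\|_{2}^{2}$ as written requires real scalars (consistent with the paper's conventions), whereas for complex-valued $g$ you would insert one extra triangle inequality, $\big|\int_{Z}(P\phi)^{2}\big|\leq\int_{Z}|P\phi|^{2}$, using that the limit on the left-hand side is nonnegative.
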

\begin{proof}
Since the uniformity seminorm does not change upon passing to an extension, it suffices to estimate the seminorm on the factor $\pi_{0} \vee \pi_{i}$ of $\tilde X$.
This factor is in fact the $2$-fold relatively independent self-joining of $X$ over $I_{T^{a_{i}}}$.
Since $T$ is ergodic on $X$, the invariant factor $I_{T^{a_{i}}}$ is finite, and it follows that the factor $\pi_{0}\vee\pi_{i}$ is isomorphic to a positive measure invariant subset of the product system $X\times X$.
It follows from \eqref{eq:seminorm-ergodic-disintegration} that
\[
\norm{F_{i}}_{U^{l}(\tilde T,c)}
\lesssim_{a_{i}}
\norm{f_{i} \otimes f_{i}}_{U^{l}(T\times T,c)}.
\]
The latter quantity can be estimated by Lemma~\ref{lem:nilfactors-of-product}.
\end{proof}

\begin{corollary}
\label{cor:limit-on-tilde-X-zero}
Let $f_{1},f_{2}\in L^{\infty}(X)$ and suppose $\norm{f_{1}}_{U^{3}(T)}=0$ or $\norm{f_{2}}_{U^{3}(T)}=0$.
Then for $\tilde\mu$-almost every point $(x,\xi_{1},\xi_{2})\in \tilde X$ we have
\[
\lim_{N\to\infty} \frac1N \sum_{n=1}^{N} f_{1}(T^{a_{1}n}x) f_{2}(T^{a_{2}n}x) f_{1}(T^{a_{1}n}\xi_{1}) f_{2}(T^{a_{2}n}\xi_{2}) = 0.
\]
\end{corollary}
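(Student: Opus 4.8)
The plan is to recognize the average in the statement as a genuine bilinear ergodic average on the extension $\tilde X$ and then combine Bourgain's pointwise bilinear theorem with the $L^{2}$ estimate of Lemma~\ref{lem:uniformity-seminorm-estimate-for-multilinear-averages}. Writing $\tilde x = (x,\xi_{1},\xi_{2})$ and recalling that $\tilde T = (T,T,T)$, the functions $F_{1},F_{2}$ from Lemma~\ref{lem:lift} satisfy
\[
F_{1}(\tilde T^{a_{1}n}\tilde x)\, F_{2}(\tilde T^{a_{2}n}\tilde x)
=
f_{1}(T^{a_{1}n}x) f_{2}(T^{a_{2}n}x) f_{1}(T^{a_{1}n}\xi_{1}) f_{2}(T^{a_{2}n}\xi_{2}),
\]
so that the average under consideration is exactly $\aveN \tilde T^{a_{1}n}F_{1} \cdot \tilde T^{a_{2}n}F_{2}$ evaluated at the point $\tilde x$. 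Since $F_{1},F_{2}\in L^{\infty}(\tilde X)$ and $a_{1},a_{2}$ are distinct, Bourgain's bilinear pointwise ergodic theorem applied to the system $(\tilde X,\tilde\mu,\tilde T)$ guarantees that this average converges for $\tilde\mu$-almost every $\tilde x$ to some limit $L(\tilde x)$. It then remains only to identify $L$ with the zero function.

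To do so I would bound the $L^{2}(\tilde\mu)$ norm of the average. Choose $i\in\{1,2\}$ with $\|f_{i}\|_{U^{3}(T)}=0$ and let $i'$ be the other index. Lemma~\ref{lem:uniformity-seminorm-estimate-for-multilinear-averages} applied on $(\tilde X,\tilde T)$ with $k=2$ yields
\[
\limsup_{N\to\infty}
\big\| \aveN \tilde T^{a_{1}n}F_{1} \cdot \tilde T^{a_{2}n}F_{2} \big\|_{2}
\lesssim_{a_{1},a_{2}}
\| F_{i} \|_{U^{2}(\tilde T,\, a_{i}-a_{i'})}.
\]
Because the averages over $h$ in the definition of the higher seminorms are symmetric and $I_{\tilde T^{c}}=I_{\tilde T^{-c}}$, the seminorm $U^{2}(\tilde T,c)$ is insensitive to the sign of $c$, so I may take $c=|a_{i}-a_{i'}|\in\N_{>0}$ and apply Lemma~\ref{lem:lift} (with $l=2$). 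This bounds the right-hand side by a constant times $\|f_{i}\|_{U^{3}(T)}^{2}=0$, so the averages tend to $0$ in $L^{2}(\tilde\mu)$.

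Finally, the integrand is bounded by $\|f_{1}\|_{\infty}^{2}\|f_{2}\|_{\infty}^{2}$, so the averages are uniformly bounded; hence their almost-everywhere convergence to $L$ upgrades, by dominated convergence on the probability space $(\tilde X,\tilde\mu)$, to convergence to $L$ in $L^{2}(\tilde\mu)$. Comparing the two $L^{2}$ limits forces $L=0$ for $\tilde\mu$-almost every $\tilde x$, which is the assertion.

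The substantive work here is entirely contained in the black box (Bourgain's theorem, used to get pointwise existence of the limit) and in the previously established Lemmas~\ref{lem:uniformity-seminorm-estimate-for-multilinear-averages} and~\ref{lem:lift}; this corollary is essentially an assembly of those ingredients. The only points requiring care, rather than genuine difficulty, are the correct identification of the four-factor product with the bilinear average of $F_{1},F_{2}$ on $\tilde X$, the sign bookkeeping for the parameter $c$ so that Lemma~\ref{lem:lift} is applicable, and the reconciliation of the pointwise limit with the vanishing $L^{2}$ limit.
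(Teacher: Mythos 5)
Your proposal is correct and follows exactly the paper's argument: identify the four-fold product as the bilinear average of $F_{1},F_{2}$ under $\tilde T$ on $\tilde X$, invoke Bourgain's theorem for pointwise existence of the limit, and use Lemma~\ref{lem:uniformity-seminorm-estimate-for-multilinear-averages} with $k=2$ together with Lemma~\ref{lem:lift} with $l=2$ to show the $L^{2}$ limit vanishes. The extra care you take with the sign of $c$ and with reconciling the pointwise and $L^{2}$ limits is sound detail that the paper leaves implicit.
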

\begin{proof}
The left-hand side of the conclusion can be written as
\[
\lim_{N\to\infty} \frac1N \sum_{n=1}^{N} \tilde T^{a_{1}n} F_{1}(x,\xi_{1},\xi_{2}) \tilde T^{a_{2}n} F_{2}(x,\xi_{1},\xi_{2}).
\]
By Bourgain's bilinear pointwise ergodic theorem this limit exists pointwise almost everywhere on $\tilde X$.
On the other hand, by Lemma~\ref{lem:uniformity-seminorm-estimate-for-multilinear-averages} (with $k=2$) and Lemma~\ref{lem:lift} (with $l=2$) the $L^{2}$ limit is zero.
\end{proof}

\section{Fully generic points}
\label{sec:fully-generic}
Recall that a measure-preserving system $(X,\mu,T)$ is called \emph{regular} if $X$ is a compact metric space, $T:X\to X$ is a homeomorphism, and $\mu$ is a Radon probability measure.
\begin{definition}
\label{def:fully-generic}
Let $f_{1},f_{2}\in C(X)$, where $(X,T)$ is a regular ergodic dynamical system.
Let $D_{i}\subset C(X)$, $i=1,2$, be the minimal $T$-invariant sub-$\Q$-algebras containing $f_{i}$.
Fix distinct non-zero integers $a_{1},a_{2}$.
We call a point $x\in X$ \emph{fully generic} for $(f_{1},f_{2})$ if for every function $F \in A(f_{1},f_{2}):=\overline{\lin D_{1}\otimes D_{2}} \subset C(X\times X)$ we have
\begin{equation}
\label{eq:fully-generic}
\lim_{N\to\infty} \aveN F(T^{a_{1}n}x,T^{a_{2}n}x)
=
\int F \dif\nu_{\pi x},
\text{ where }
\nu_{z} = \int_{Z'} \mu_{z + z_{1}} \otimes \mu_{z+z_{2}} \dif(z_{1},z_{2}),
\end{equation}
the latter integral being taken with respect to the Haar measure on $Z'$.
\end{definition}
In other words, $\nu_{\pi x}$ is the natural measure on the set of pairs $(\xi_{1},\xi_{2})$ with
\[
(\pi\xi_{1} - \pi x,\pi\xi_{2} - \pi x) \in Z'.
\]
Note that $A(f_{1},f_{2})$ is a closed sub-$\R$-algebra of $C(X\times X)$.
\begin{lemma}
\label{lem:fully-generic}
For any regular ergodic system $(X,T)$ and any $f_{1},f_{2}\in C(X)$ the set of fully generic points has full measure.
\end{lemma}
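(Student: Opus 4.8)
The plan is to prove the statement for a countable generating family of functions $F$ and then pass to all of $A(f_{1},f_{2})$ by density. Since $D_{1},D_{2}$ are generated over $\Q$ by the countable sets $\{T^{k}f_{i}\}_{k\in\Z}$, they are countable, so $D_{1}\otimes D_{2}$ is countable and its $\Q$-linear span is dense in $A(f_{1},f_{2})=\overline{\lin D_{1}\otimes D_{2}}$. Both sides of \eqref{eq:fully-generic} are $\R$-linear in $F$ and bounded by $\|F\|_{C(X\times X)}$ (the left-hand averages by the triangle inequality, the right-hand integral because $\nu_{\pi x}$ is a probability measure). Hence a routine $3\veps$ argument shows that if \eqref{eq:fully-generic} holds at a point $x$ for every $F$ in the countable set $D_{1}\otimes D_{2}$, then it holds at $x$ for all $F\in A(f_{1},f_{2})$. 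It therefore suffices to exhibit, for each fixed $\phi\in D_{1}$ and $\psi\in D_{2}$, a full-measure set on which \eqref{eq:fully-generic} holds for $F=\phi\otimes\psi$, and to intersect over the countable family.

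Fix $\phi\in D_{1}$, $\psi\in D_{2}\subset C(X)\subset L^{\infty}(X)$. By Bourgain's bilinear pointwise ergodic theorem \cite{MR1037434} the averages $\aveN (T^{a_{1}n}\phi)(T^{a_{2}n}\psi)$ converge for $\mu$-almost every $x$, and being uniformly bounded they converge to the same limit in $L^{2}(\mu)$. Write $\bar\phi:=\E(\phi\mid Z)$ and $\bar\psi:=\E(\psi\mid Z)$, so $\phi-\bar\phi$ and $\psi-\bar\psi$ are orthogonal to the Kronecker factor. Since $T$ is ergodic, the invariant factor $I_{T^{c}}$ is a finite subfactor of $Z$, and a short spectral computation shows that $\|g\|_{U^{2}(T,c)}=0$ whenever $\E(g\mid Z)=0$. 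Applying Lemma~\ref{lem:uniformity-seminorm-estimate-for-multilinear-averages} with $k=2$ to the differences (using $i=1$, $c=a_{1}-a_{2}$ for the first and $i=2$, $c=a_{2}-a_{1}$ for the second) therefore gives
\[
\Big\| \aveN (T^{a_{1}n}\phi)(T^{a_{2}n}\psi) - \aveN (T^{a_{1}n}\bar\phi)(T^{a_{2}n}\bar\psi) \Big\|_{L^{2}} \to 0.
\]
Since both averages converge pointwise almost everywhere (Bourgain) and their difference tends to $0$ in $L^{2}$, they have the same pointwise limit for $\mu$-almost every $x$.

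It remains to evaluate the limit of $\aveN (T^{a_{1}n}\bar\phi)(T^{a_{2}n}\bar\psi)$. As $\bar\phi,\bar\psi$ are measurable with respect to the Kronecker factor and $\pi(T^{a_{i}n}x)=\pi x+a_{i}n\alpha$, this is the bilinear average $\aveN \bar\phi(\pi x+a_{1}n\alpha)\,\bar\psi(\pi x+a_{2}n\alpha)$ on the ergodic group rotation $(Z,\alpha)$ with $\bar\phi,\bar\psi\in L^{\infty}(Z)$. By Bourgain's theorem applied on $Z$ it converges almost everywhere, and expanding $\bar\phi,\bar\psi$ into characters of $Z$ and averaging shows that its $L^{2}(Z)$-limit is $\int_{Z'}\bar\phi(\pi x+z_{1})\bar\psi(\pi x+z_{2})\dif(z_{1},z_{2})$: the surviving characters $\chi\otimes\eta$ are precisely those trivial on the generator $(a_{1}\alpha,a_{2}\alpha)$ of $Z'$, hence trivial on $Z'$, and $\int_{Z'}\chi(z_{1})\eta(z_{2})\dif(z_{1},z_{2})$ is the corresponding indicator. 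Comparing pointwise and $L^{2}$ limits once more, this integral equals $\int \phi\otimes\psi\dif\nu_{\pi x}$ by the definition of $\nu_{\pi x}$, which establishes \eqref{eq:fully-generic} for $F=\phi\otimes\psi$.

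The reduction to product functions and the two appeals to Bourgain's theorem for existence of the limits are routine; the substance lies in \emph{identifying} the limit. The crux is that the Kronecker factor is characteristic for two-term averages---obtained here from the vanishing of $U^{2}(T,c)$ on the orthocomplement of $Z$ together with Lemma~\ref{lem:uniformity-seminorm-estimate-for-multilinear-averages}---after which the surviving average is computed by harmonic analysis on the monothetic group $Z$ and equidistribution in $Z'$. The single point requiring care is the repeated interchange between the pointwise limit supplied by Bourgain's theorem and the $L^{2}$ limit supplied by the characteristic factor estimate, which is legitimate because every sequence in sight is uniformly bounded.
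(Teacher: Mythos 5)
Your proof is correct and follows essentially the same route as the paper: reduce to product functions $\phi\otimes\psi$ by linearity and density over a countable family, show the Kronecker factor is characteristic via Lemma~\ref{lem:uniformity-seminorm-estimate-for-multilinear-averages} and the vanishing of $U^{2}(T,c)$ on the orthocomplement of $Z$, identify the limit on $Z\times Z$ as the Haar integral over the coset of $Z'$, and upgrade from $L^{2}$ to pointwise convergence via Bourgain's theorem. The only cosmetic differences are that the paper establishes $\|g\|_{U^{2}(T,c)}=0$ for $g\perp Z$ by first cutting $g$ along the finitely many atoms of $I_{T^{c}}$ (since subadditivity of $U^{2}(T,c)$ is not verified there) and identifies the Kronecker limit via unique ergodicity of the orbit closure rather than your explicit character expansion; both variants are sound.
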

\begin{proof}
Recall that the Kronecker factor is characteristic for bilinear ergodic averages in the sense that
\[
\lim_{N\to\infty} \aveN T^{a_{1}n} g_{1} T^{a_{2}n} g_{2}
=
\lim_{N\to\infty} \aveN T^{a_{1}n}\E(g_{1} | Z) T^{a_{2}n}\E(g_{2} | Z)
\]
in $L^{2}(X)$ for any functions $g_{1},g_{2} \in L^{\infty}(X)$.
One way to see this is to show that the limit vanishes if $g_{1} \perp Z$, say.
To this end by Lemma~\ref{lem:uniformity-seminorm-estimate-for-multilinear-averages} it suffices to write $g_{1}$ as a finite linear combination of functions with vanishing $U^{2}(T,c)$ norm, $c=a_{1}-a_{2}$.
Since $T$ is ergodic, the factor $I_{T^{c}}$ consists of finitely many atoms, say $B_{1},\dots,B_{c'}$.
These atoms lie in $Z$, and it follows that $g_{1,j}:=\one_{B_{j}}g_{1} \perp Z$ for every $j$.
On the other hand, any product of $T$-translates of $g_{1,j}$ is supported on an atom of $I_{T^{c}}$, so the $L^{2}$ norm of its projection onto $I_{T^{c}}$ is comparable with the $L^{2}$ norm of its projection onto $I_{T}$.
It follows that $\norm{g_{1,j}}_{U^{2}(T,c)} \approx_{c} \norm{g_{1,j}}_{U^{2}(T)} = 0$.

On the product of Kronecker factors the ergodic averages converge to the integral over the orbit closure, so we obtain
\[
\lim_{N\to\infty} \aveN g_{1}(T^{a_{1}n}x) g_{2}(T^{a_{2}n}x)
=
\int_{(\pi x,\pi x) + Z'} \E(g_{1} | Z) \otimes \E(g_{2} | Z)
=
\int g_{1} \otimes g_{2} \dif\nu_{\pi x}
\]
in $L^{2}$, and by Bourgain's theorem also pointwise almost everywhere.

Hence there is a full measure set $X'$ on which the equality \eqref{eq:fully-generic} holds for functions of the form $F=g_{1}\otimes g_{2}$, $g_{i}\in D_{i}$.
The claim follows by linearity and density.
\end{proof}

We can now proceed with the verification of the orthogonality criterion.

\begin{proof}[Proof of Theorem~\ref{thm:bilinear-weight-bfko}]
The condition \eqref{eq:cond} is measurable (since it suffices to consider rational $\delta$), so in view of \eqref{eq:seminorm-ergodic-disintegration} and by the ergodic decomposition we may assume that $(X,T,\mu)$ is ergodic.
Passing to a suitable topological model we may assume that $(X,\mu,T)$ is a regular ergodic system and $f_{1},f_{2} \in C(X)$.

It suffices to show that
\begin{equation}
\label{eq:thm:bilinear-weight-bfko:claim1}
\int
\lim_{L\to\infty}
\inf_{R\geq L}
\ld(S_{\delta,L,R}((f_{1}(T^{a_{1}n}x) f_{2}(T^{a_{2}n}x))_{n})) \dif\mu(x)= 1
\end{equation}
holds for a fixed $\delta>0$.
Let $\eta_{\delta}$ be a smooth function with $\one_{[-\delta/2,\delta/2]} \leq \eta_{\delta} \leq \one_{[-\delta,\delta]}$.
Then for every $x\in X$ we have
\begin{align*}
\lim_{L\to\infty}
\inf_{R\geq L}&\,
\ld(S_{\delta,L,R}((f_{1}(T^{a_{1}n}x) f_{2}(T^{a_{2}n}x))_{n}))\\
&=
\lim_{L\to\infty}
\inf_{R\geq L}
\liminf_{H\to\infty}\aveHo \prod_{N=L}^{R}\\
&\qquad 1_{\Set{ \abs{ \aveN f_{1}(T^{a_{1}n}x) f_{2}(T^{a_{2}n}x) f_{1}(T^{a_{1}(n+h)}x) f_{2}(T^{a_{2}(n+h)}x) } < \delta }}\\
&\geq
\lim_{L\to\infty}
\inf_{R\geq L}
\liminf_{H\to\infty}\aveHo \prod_{N=L}^{R}\\
&\qquad\eta_{\delta}(\abs[\big]{\aveN f_{1}(T^{a_{1}n}x) f_{2}(T^{a_{2}n}x) f_{1}(T^{a_{1}(n+h)}x) f_{2}(T^{a_{2}(n+h)}x)})
\end{align*}
Now, by the Stone--Weierstrass theorem the function
\[
F_{x,L,R}(\xi_{1},\xi_{2})
:=
\prod_{N=L}^{R} \eta_{\delta}(\abs[\big]{\aveN f_{1}(T^{a_{1}n}x) f_{2}(T^{a_{2}n}x) f_{1}(T^{a_{1}n}\xi_{1}) f_{2}(T^{a_{2}n}\xi_{2})})
\]
lies in $A(f_{1},f_{2})$.
Hence for every $x\in X$ that is fully generic for $(f_{1},f_{2})$ we obtain
\begin{align*}
\lim_{L\to\infty}
\inf_{R\geq L}
\ld(S_{\delta,L,R}((f_{1}(T^{a_{1}n}x) f_{2}(T^{a_{2}n}x))_{n}))
&\geq
\lim_{L\to\infty}
\inf_{R\geq L}
\int F_{x,L,R}(\xi_{1},\xi_{2}) \dif\nu_{\pi x}\\
&\geq
\lim_{L\to\infty}
\int \inf_{R\geq L} F_{x,L,R}(\xi_{1},\xi_{2}) \dif\nu_{\pi x}
\end{align*}

Since the sequence $(\inf_{R\geq L} F_{x,L,R}(\xi_{1},\xi_{2}))$ is monotonically increasing in $L$ for every $x,\xi_{1},\xi_{2}\in X$ and by the monotone convergence theorem we obtain for the left-hand side of \eqref{eq:thm:bilinear-weight-bfko:claim1} the lower bound
\[
\lim_{L\to\infty} \iint
\inf_{R\geq L} F_{x,L,R}(\xi_{1},\xi_{2}) \dif\nu_{\pi x}(\xi_{1},\xi_{2}) \dif\mu(x).
\]
The double integral above is taken with respect to the measure
\[
\int_{X} \delta_{x} \otimes \nu_{\pi x} \dif\mu(x)
=
\int_{Z} \mu_{z} \otimes \nu_{z} \dif z
=
\tilde \mu.
\]
On the other hand, by Corollary~\ref{cor:limit-on-tilde-X-zero} we have
\[
\aveN f_{1}(T^{a_{1}n}x) f_{2}(T^{a_{2}n}x) f_{1}(T^{a_{1}n}\xi_{1}) f_{2}(T^{a_{2}n}\xi_{2})
\to 0
\]
as $N\to\infty$ for $\tilde\mu$-almost every $(x,\xi_{1},\xi_{2})\in \tilde X$.
It follows that
\[
\inf_{R\geq L} F_{x,L,R}(\xi_{1},\xi_{2}) \to 1
\]
as $L\to\infty$ for $\tilde\mu$-almost every $(x,\xi_{1},\xi_{2})\in \tilde X$, and \eqref{eq:thm:bilinear-weight-bfko:claim1} follows by the monotone convergence theorem.
\end{proof}

\printbibliography
\end{document}